\newcommand{\usemodifiedcc}{\usepackage[boldsans]{ccfonts}}
\def\dbr{\mathbb{R}}
\def\dbn{\mathbb{N}}
\def\dbq{\mathbb{Q}}
\def\moverlay{\mathpalette\mov@rlay}
\def\mov@rlay#1#2{\leavevmode\vtop{%
		\baselineskip\z@skip \lineskiplimit-\maxdimen
		\ialign{\hfil$\m@th#1##$\hfil\cr#2\crcr}}}
\newcommand{\charfusion}[3][\mathord]{
	#1{\ifx#1\mathop\vphantom{#2}\fi
		\mathpalette\mov@rlay{#2\cr#3}
	}
	\ifx#1\mathop\expandafter\displaylimits\fi}
\newcommand{\cupdot}{\charfusion[\mathbin]{\cup}{\cdot}}
\theoremstyle{plain}
\newtheorem{thm}{Theorem}
\newtheorem{lem}{Lemma}
\newtheorem{cor}{Corollary}
\theoremstyle{definition}
\newtheorem{rem}{Remark}
\title{%
	Sincov's and other functional equations and negative interest rates}
\author{Gergely Kiss\thanks{The author was supported by a Premium Postdoctoral Fellowship of the Hungarian Academy of Science, and by Hungarian National Research, Development
and Innovation Office grants K-124749 and K-K142993.}~ and Jens Schwaiger
}  % Declares the author's name.
\date{}
\begin{document}  %End of preamble and beginning of text.
	\selectlanguage{english}
	\maketitle%
	\begin{quote}\small
		\begin{center}
			\sffamily\bfseries Abstract
		\end{center}
		
		Investigating the future value $F(K,s,t)$ of a capital
  $K$ invested at date $S$ at date $t$ the ``natural'' condition $F(K,s,t)\geq K$ has  lost its naturality because of the strange fact of negative interest rates. This leads to the task of describing the possible solutions of the multiplicative \textsc{Sincov} equation $f(s,u)=f(s,t)f(t,u)$ for $s\leq t\leq u$ where $f(s,t)=0$ may happen. In this paper we solve this task and discuss connections to the theory of investments.
		
		\noindent {\sffamily\bfseries Mathematics Subject Classification
			(2020).} 39B52, 91B74.
		
		\noindent {\sffamily\bfseries Keywords.} Sincov equation, business mathematics.
		
	\end{quote}
\section{Introduction and motivation}\label{sec1}
A rather well known and elegant application of the theory of functional equation  is given by the deduction of the formula  of theoretical interest compounding.
As a starting point some  ``reasonable'' conditions  for the \emph{future value} function
\begin{center}
$F\colon[0,\infty)\times[0,\infty)\to\dbr$
\end{center}
are given:

\begin{align}
	&F(K+L,t)=F(K,t)+F(L,t),\quad K,L,t\geq0\label{condth1}\\
	&F(F(K,t),s)=F(K,t+s),\quad K,s,t\geq 0 \text{ and }\label{condth2}\\
	&F(K,t)\geq K,\quad K,t\geq 0.\label{condth3}
\end{align}

\begin{thm}\label{thm1}
Let $F\colon[0,\infty)\times[0,\infty)\to\dbr$ be given. Then (\ref{condth1}), (\ref{condth2}) and (\ref{condth3}) are satisfied iff there is some  $q\ge1$ such that
\begin{equation}	
F(K,t)=K q^t,\quad K,t\geq 0.	
\end{equation}
\end{thm}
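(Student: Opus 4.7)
The plan is to isolate the dependence on $K$ first, then on $t$, using each of the three conditions in sequence. The direction from the functional form back to the conditions is a direct verification and I would dispatch it in one sentence.

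First, I would fix $t\geq 0$ and regard $g_t\colon[0,\infty)\to\dbr$, $g_t(K):=F(K,t)$. Condition (\ref{condth1}) says $g_t$ satisfies Cauchy's additive equation on $[0,\infty)$, and condition (\ref{condth3}) gives $g_t(K)\geq K\geq 0$ for all $K\geq 0$. Taking $K=L=0$ in (\ref{condth1}) forces $g_t(0)=0$, and from additivity plus non-negativity one gets that $g_t$ is non-decreasing on $[0,\infty)$. The classical fact that an additive function on $[0,\infty)$ that is monotone (or merely bounded below on some set of positive measure) is necessarily linear then yields
\[
F(K,t)=K\,f(t),\qquad K,t\geq 0,
\]
where $f(t):=F(1,t)$. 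From (\ref{condth3}) with $K=1$ we read off $f(t)\geq 1$ for every $t\geq 0$.

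Next I would plug this representation into (\ref{condth2}). The left-hand side becomes $F(Kf(t),s)=Kf(t)f(s)$, while the right-hand side is $Kf(t+s)$; choosing $K=1$ gives the multiplicative Cauchy equation
\[
f(s+t)=f(s)f(t),\qquad s,t\geq 0.
\]
Since $f\geq 1$, we have $f(s+t)=f(s)f(t)\geq f(t)$, so $f$ is non-decreasing on $[0,\infty)$; in particular $f$ is strictly positive, so I may set $h(t):=\log f(t)\geq 0$. Then $h$ is non-decreasing and additive on $[0,\infty)$, and applying the same regularity principle as before one obtains $h(t)=ct$ for some $c\geq 0$. Putting $q:=e^c=f(1)\geq 1$ gives $f(t)=q^t$, and hence $F(K,t)=Kq^t$.

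The converse is a short check: for any $q\geq 1$, the function $F(K,t)=Kq^t$ is linear in $K$, satisfies $F(F(K,t),s)=Kq^tq^s=Kq^{t+s}=F(K,t+s)$, and fulfills $F(K,t)=Kq^t\geq K$ because $q^t\geq 1$ for $q\geq 1$ and $t\geq 0$.

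The only real obstacle is the appeal to regularity needed to rule out pathological (Hamel-basis) solutions of the two Cauchy equations. This is exactly where condition (\ref{condth3}) is essential: it provides the lower bound that forces monotonicity both of $g_t$ in $K$ and of $f$ in $t$, and once monotonicity is in hand the classical structure theorems apply without further assumptions on $F$.
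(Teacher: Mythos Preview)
Your argument is correct and follows essentially the same route as the paper: use (\ref{condth1}) together with the lower bound from (\ref{condth3}) to force $F(K,t)=K f(t)$, deduce the multiplicative Cauchy equation for $f$ from (\ref{condth2}), and then pass to $\log f$ and exploit $f\ge1$ to obtain $f(t)=q^t$ with $q\ge1$. The only minor difference is presentational---you derive monotonicity of $g_t$ and of $f$ explicitly before invoking the linearity of regular Cauchy solutions, whereas the paper appeals directly to the bounded-below criterion from Acz\'el.
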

 The proof can be found in \cite[pp. 105--106]{AJ1966}, \cite{EW1978} and in \cite{SJ1988}.

Note that
(\ref{condth3}) together with (\ref{condth1}) implies that $F(K,t)=K\cdot f(t)$, since (\ref{condth1}) says that $F(\cdot,t)$ is additive and (\ref{condth3}) that this function is bounded from below on $[0,\infty)$ (see \cite[p. 34, Theorem 1]{AJ1966}). (\ref{condth2}) implies $f(t+s)=f(t)f(s)$ for all $s,t\ge 0$ and (\ref{condth3}) that $f(t)\geq 1$ for all $t$. This means that $g:=\ln\circ f$ is additive and $\geq 0$ on $[0,\infty)$ and therefore there is some $r\geq 0$ such that $\ln(f(t))=r t$ for all $t$. So $q:=\exp(g(1))=\exp(r)\geq 1$ and $f(t)=q^t$ for all $t$.

\section{Theoretical rule of interest compounding with negative interest rates allowed}\label{sec2}
At least for the last decade it has became common in economics to admit interest rates being zero or even negative. This clearly contradicts \eqref{condth3}. So one could ask for a substitute for Theorem~\ref{thm1} which allows for the new situation.

Note that
\begin{align}
	&F(K,t)\geq c K,\quad K,t\geq 0 \text{ for some } c>0\label{condthm3'}
\end{align}
instead of (\ref{condth3}) does not help. Of course a result as in the theorem would result with some $q>0$. But taking $t$ large enough shows that \eqref{condthm3'} is only possible when $q\geq1$.
One possibility to characterize the theoretical rule of interest compounding if negative interest rates are admissible  could be the following result.
\begin{thm}\label{thm2}
	A function $F\colon[0,\infty)\times[0,\infty)\to\dbr$ satisfies
	\begin{align}
	&F(K+L,t)=F(K,t)+F(L,t),\quad K,L,t\geq0\label{neg1}\\
	&F(F(K,t),s)=F(K,t+s),\quad K,s,t\geq 0\label{neg2}\\
	&F(\cdot,t)\text{ is monotonic for all } t\text{ and}\label{neg3}\\
	&F(K,\cdot) \text{ is monotonic for all } K.\label{neg4}	
	\end{align}
	iff there is some $q\geq0$ such that
		\begin{equation}\label{formula}
		F(K,t)=K q^t,\quad K,t\geq0,
		\end{equation}
	where for $q=0$ both the cases $q^0=1$ and $q^0=0$ are possible.
\end{thm}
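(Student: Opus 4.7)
The ``if'' direction is a routine check: for $F(K,t)=Kq^t$ with $q\ge0$, (\ref{neg1}) follows from distributivity, (\ref{neg2}) from $q^{s+t}=q^sq^t$ (and $Kq^t\ge0$ makes the outer $F$ in (\ref{neg2}) legal), while (\ref{neg3}) and (\ref{neg4}) are immediate. For the only-if direction the plan is to parallel the argument sketched after Theorem~\ref{thm1}, replacing boundedness from below by the weaker monotonicity hypotheses. First, for each fixed $t$ the map $K\mapsto F(K,t)$ is additive by (\ref{neg1}) and monotonic by (\ref{neg3}); by the classical result cited in Section~\ref{sec1} (\cite[p.~34, Thm.~1]{AJ1966}), any such function is linear, so $F(K,t)=Kf(t)$ with $f(t):=F(1,t)$.

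Next, I would observe that in order for the composition $F(F(K,t),s)$ on the left of (\ref{neg2}) to be well defined, the inner value $Kf(t)$ must lie in $[0,\infty)$ for every $K\ge0$, which forces $f(t)\ge0$ for every $t$. Substituting $F(K,t)=Kf(t)$ into (\ref{neg2}) with $K=1$ then yields the multiplicative relation $f(s+t)=f(s)f(t)$ for all $s,t\ge0$, and (\ref{neg4}) tells us that $f=F(1,\cdot)$ is monotonic. The task thus reduces to classifying all nonnegative monotonic multiplicative functions $f\colon[0,\infty)\to[0,\infty)$.

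From $f(0)=f(0)^2$ one has $f(0)\in\{0,1\}$. The case $f(0)=0$ collapses everything to $f\equiv0$, giving $F\equiv0$ and matching $q=0$ with the convention $q^0=0$. If $f(0)=1$ and $f$ is nowhere zero, then $g:=\ln\circ f$ is additive and monotonic on $[0,\infty)$, hence linear by the same classical result, and $f(t)=q^t$ with $q:=e^{g(1)}>0$. The step I expect to require the most care is the remaining case $f(0)=1$ together with $f(t_0)=0$ for some $t_0>0$; here the monotonicity hypothesis has to be used in earnest. The identity $f(t)=f(t/2)^2$ forces $f(t_0/2^n)=0$ for every $n\in\dbn$, so the zeros of $f$ accumulate at $0$. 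Combined with $f\ge0$ and the observation that $f$ must be non-increasing (since $f(0)=1>0=f(t_0)$ rules out the non-decreasing alternative), this pushes $f(t)=0$ for every $t>0$ while leaving $f(0)=1$. This is precisely the $q=0$ branch with $q^0=1$, and exhausts all cases.
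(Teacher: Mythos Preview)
Your proof is correct and follows essentially the same route as the paper: reduce via additivity plus monotonicity to $F(K,t)=Kf(t)$, derive the multiplicative equation for $f$, and split into the zero and strictly positive cases. The only cosmetic difference is in the ``zero at some $t_0>0$'' branch: the paper dispenses with monotonicity there by arguing purely multiplicatively (from $f(t_0/n)=0$ one gets $f\equiv0$ on each $[t_0/n,\infty)$), whereas you invoke (\ref{neg4}) to force $f$ non-increasing---both arguments are short and lead to the same conclusion.
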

\begin{proof}
	Obviously $F$ with (\ref{formula}) satisfies all the conditions (\ref{neg1}) -- (\ref{neg4}).
	
	Let on the other hand $F$ satisfy these conditions. Since $F(\cdot,t)$ is additive and monotonic it is bounded from one side on some interval which implies (see \cite[p. 34, Theorem~1]{AJ1966}) that $F(K,t)=K\cdot f(t)$ with $f(t)=F(1,t)$. Condition (\ref{neg2}) implies with $K=1$ that
	\begin{equation*}
		f(t)f(s)=f(s+t),\quad s,t\geq0.
	\end{equation*}
To solve this we follow \cite[p. 38, Theorem~1]{AJ1966} and
assume that $f(t_0)=0$ for some $t_0>0$. Then $f(t)=f(t_0+(t-t_0))=f(t_0)f(t-t_0)=0$ for all $t\geq t_0$. Since moreover $0=f(t_0)=f\left(n\frac{t_0}{n}\right)=f\left(\frac{t_0}{n}\right)^n$, $t_0$ may be chosen arbitrarily close to $0$. So $f(t)=0$ for all $t>0$. $f(0)=f(0+0)=f(0)^2$ implies $f(0)\in\{0,1\}$ and therefore $f(t)=0^t$ for all $t$ with $0^0\in\{0,1\}$.
In the remaining case $f(t)$ must be different from $0$ for all $t>0$. By $f(t)=f\left(\frac{t}{2}\right)^2$ the value $f(t)$ must even be $>0$. Note also that $f(0)=1$ since $f(t)=f(t+0)=f(t)f(0)$. Moreover $f$ is monotonic. So using the remarks following Theorem~\ref{thm1} there is some $q>0$ such that $f(t)=q^t$ for all $t$.
\end{proof}	
\section{Future value formulas depending on the interval of investment}\label{sec3}
In \cite[Theorem~2]{SJ1988} a situation is discussed where for $\Delta:=\Delta_\dbr:=\{(s,t)\in\dbr^2\mid s\leq t\}$ the value of the function $F\colon[0,\infty)\times\Delta\to[0,\infty)$  at $(K,s,t)$ denotes the future value of the capital
$K$ at time $t$ when invested at time $s$.
Theorem~2 in \cite{SJ1988} reads as follows.
\begin{thm}\label{thm3}
The function $F\colon[0,\infty)\times\Delta\to[0,\infty)$
satisfies the conditions
\begin{align}
&F(K+L,s,t)=F(K,s,t)+F(L,s,t),\quad K,L\geq0,(s,t)\in\Delta\label{delta1}\\
&F(F(K,s,t),t,u)=F(K,s,u), \quad K\geq0, (s,t), (t,u)\in\Delta
\text{ and  }\label{delta2}\\
&F(K,s,t)\geq K, \quad K\geq0, (s,t)\in \Delta,\label{delta3}
\end{align}
iff there is some non decreasing function $\varphi\colon\dbr\to(0,\infty)$ such that
\begin{equation}
	\label{delta4}
	F(K,s,t)=K\frac{\varphi(t)}{\varphi(s)},\quad K\geq0, (s,t)\in\Delta.
\end{equation}
\end{thm}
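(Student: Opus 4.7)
The ``if'' direction is a direct substitution: additivity in $K$ is obvious, the cocycle identity (\ref{delta2}) collapses after cancelling $\varphi(t)/\varphi(t)$, and monotonicity of $\varphi$ gives $\varphi(t)/\varphi(s)\geq 1$ on $\Delta$, hence (\ref{delta3}).

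For the converse I would mirror the argument sketched after Theorem~\ref{thm1}. Fix $(s,t)\in\Delta$. The map $K\mapsto F(K,s,t)$ is additive on $[0,\infty)$ by (\ref{delta1}) and, being nonnegative-valued, is bounded from below. Hence \cite[p.~34, Theorem~1]{AJ1966} forces
\[
F(K,s,t)=K\cdot f(s,t), \qquad f(s,t):=F(1,s,t).
\]
Substituting into (\ref{delta2}) yields $Kf(s,t)f(t,u)=Kf(s,u)$; cancelling $K$ (say at $K=1$) gives the two-variable multiplicative Sincov equation
\[
f(s,u)=f(s,t)\, f(t,u)\qquad \text{for } s\le t\le u,
\]
while (\ref{delta3}) at $K=1$ shows $f(s,t)\ge 1>0$ on $\Delta$.

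Since $f>0$, I would set $g:=\ln\circ f\colon\Delta\to[0,\infty)$, so that $g$ satisfies the \emph{additive} cocycle identity $g(s,u)=g(s,t)+g(t,u)$. To exhibit a potential $\psi$ for $g$, pick any base point $s_0\in\dbr$ and define
\[
\psi(t) := \begin{cases} g(s_0,t), & t\ge s_0,\\ -g(t,s_0), & t<s_0.\end{cases}
\]
A short case distinction on the position of $s,t$ relative to $s_0$ (the three cases $s_0\le s\le t$, $s\le s_0\le t$, and $s\le t\le s_0$) shows, using the cocycle identity in each, that $g(s,t)=\psi(t)-\psi(s)$ on all of $\Delta$. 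Putting $\varphi:=\exp\circ\psi\colon\dbr\to(0,\infty)$ then yields
\[
F(K,s,t)=K\exp\bigl(\psi(t)-\psi(s)\bigr) = K\,\frac{\varphi(t)}{\varphi(s)}.
\]
Monotonicity comes for free: $g\ge 0$ on $\Delta$ is exactly the statement $\psi(s)\le\psi(t)$ whenever $s\le t$, so $\varphi$ is nondecreasing.

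The main obstacle is not conceptual but a piece of bookkeeping in the construction of $\psi$: one has to choose the base point $s_0$ and verify that the telescoping identity $g(s,t)=\psi(t)-\psi(s)$ holds uniformly, regardless of whether $s$ and $t$ lie on the same side of $s_0$. Once this step is in place, the remainder is a matter of exponentiating and reading off the sign of $\psi(t)-\psi(s)$.
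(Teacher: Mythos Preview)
Your argument is correct. The paper itself does not supply a proof of Theorem~\ref{thm3} (it is quoted from \cite{SJ1988}), but the Remark immediately following it records the intended construction: one fixes a base point $t_0$ and sets $\varphi(t)=f(t_0,t)$ for $t\ge t_0$ and $\varphi(t)=1/f(t,t_0)$ for $t<t_0$, working directly with the multiplicative Sincov solution $f$. Your detour through the logarithm $g=\ln\circ f$ and the additive potential $\psi$ is harmless but unnecessary: computing $\varphi=\exp\circ\psi$ from your definition of $\psi$ gives back exactly the paper's $\varphi$ (with $s_0=t_0$), so the two constructions coincide line for line once you exponentiate.
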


\begin{rem}
(\ref{delta4}) is the result of solving the multiplicative \textsc{Sincov} equation $f(s,t)f(t,u)=f(s,u)$. Moreover, choosing some fixed $t_0$, the function $\varphi$  is given by
\begin{equation}\label{phi}
	\varphi(t)=
	\begin{cases}
		f(t_0,t)&\text{, if } t\geq t_0\\
		\frac1{f(t,t_0)}& \text{, if } t<t_0.\\
	\end{cases}
\end{equation}
\end{rem}
Now we want to investigate the situation when (\ref{delta3}) is weakened in order to take care of (generalized) negative interest rates also in the situation when intervals of investments themselves  are considered rather than only the length of them.
\begin{thm}\label{thm4}
	The function $F\colon[0,\infty)\times\Delta\to[0,\infty)$
	satisfies the conditions \eqref{delta1} and \eqref{delta2} and
	%\begin{align}
	%	&F(K+L,s,t)=F(K,s,t)+F(L,s,t),\quad K,L\geq0,(s,t)\in\Delta
	%	\tag{\ref{delta1}} 	\\
%		&F(F(K,s,t),t,u)=F(K,s,u), \quad K\geq0, (s,t), (t,u)\in\Delta
%	\tag{\ref{delta2}} \text{ and  }\\
 \begin{align}
&F(\cdot,s,t)\text{ is monotonic on some interval for all }  (s,t)\in \Delta,\label{delta5}
	\end{align}
	iff there is some solution $f\colon\Delta\to[0,\infty)$ of the Sincov equation
	\begin{equation}\label{sincov}
	f(s,t)f(t,u)=f(s,u),\quad (s,t), (t,u)\in\Delta	
	\end{equation}
	such that
	\begin{equation}\label{gensol}
			F(K,s,t)=K f(s,t),\quad K\geq0, (s,t)\in\Delta.
	\end{equation}
\end{thm}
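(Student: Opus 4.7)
The ``if'' direction is immediate and should be dispatched in a single line: the ansatz $F(K,s,t)=Kf(s,t)$ with any $f\colon\Delta\to[0,\infty)$ satisfying \eqref{sincov} makes $F(\cdot,s,t)$ linear (hence additive and monotonic on all of $[0,\infty)$), while condition \eqref{delta2} reduces exactly to the multiplicative Sincov equation $f(s,t)f(t,u)=f(s,u)$.

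For the converse the plan mirrors the strategy used in the proof of Theorem~\ref{thm2}. First I would fix $(s,t)\in\Delta$ and combine \eqref{delta1} with \eqref{delta5}: the map $K\mapsto F(K,s,t)$ is additive on $[0,\infty)$ and, being monotonic on some nondegenerate subinterval, is bounded there. Hence, by the classical regularity theorem \cite[p.~34, Theorem~1]{AJ1966} already invoked in the proof of Theorem~\ref{thm2}, it must be linear, i.e.\ of the form $K\mapsto K\cdot f(s,t)$, where one is forced to set $f(s,t):=F(1,s,t)$. Since $F$ takes values in $[0,\infty)$, we automatically get $f(s,t)\ge 0$, so this produces both a candidate $f\colon\Delta\to[0,\infty)$ and the representation \eqref{gensol}.

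Once \eqref{gensol} is in hand, the Sincov equation drops out of \eqref{delta2}. Concretely, for any $K\ge 0$ and $(s,t),(t,u)\in\Delta$ I would write
\[
K f(s,t) f(t,u) = F\bigl(K f(s,t),t,u\bigr) = F\bigl(F(K,s,t),t,u\bigr) = F(K,s,u) = K f(s,u),
\]
and then specialize to $K=1$ to conclude $f(s,t)f(t,u)=f(s,u)$, which is \eqref{sincov}.

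No genuine obstacle is expected. The only mildly delicate step is the first one, where the local monotonicity hypothesis \eqref{delta5} has to be converted into an input for the Cauchy-type regularity theorem; but monotonicity on any nondegenerate interval trivially implies boundedness on that interval, so \cite[p.~34, Theorem~1]{AJ1966} applies verbatim. Note also that, in contrast to Theorem~\ref{thm3}, the present conclusion does not yield a representation of $f$ through a single one-variable function $\varphi$, because dropping condition \eqref{delta3} no longer forces $f$ to be strictly positive, so the ratio $\varphi(t)/\varphi(s)$ appearing in \eqref{delta4} is no longer available in general; describing the full structure of the Sincov solutions that can occur is deferred to the subsequent analysis of \eqref{sincov} announced in the abstract.
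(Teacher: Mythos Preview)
Your proposal is correct and follows exactly the same approach as the paper: use \eqref{delta1} together with \eqref{delta5} and \cite[p.~34, Theorem~1]{AJ1966} to obtain $F(K,s,t)=K\,f(s,t)$ with $f(s,t)=F(1,s,t)$, then read off \eqref{sincov} from \eqref{delta2}, and observe that the converse is immediate. The paper's own proof is simply a two-line compression of precisely these steps.
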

\begin{proof}
	(\ref{delta1}) and (\ref{delta5}) implies that $F$ has the form (\ref{gensol}) with $f(s,t)=F(1,s,t)$. Accordingly (\ref{delta2}) results in (\ref{sincov}).
	
	On the other hand (\ref{sincov}) and (\ref{gensol}) imply (\ref{delta1}), (\ref{delta2}) and (\ref{delta5}).
\end{proof}
\begin{rem}
	The rest of our considerations is devoted to the solution of (\ref{sincov}) in the slightly generalized situation that the function
	\begin{equation}
		f\colon\Delta_J\to\dbr
	\end{equation}
	is defined on $\Delta=\Delta_J:=\{(s,t)\in J^2\mid s\leq t\}$ for some non-trivial interval $J$, has $\dbr$ as the codomain and solves (\ref{sincov}). A special case has been considered in \cite[Theorem~14]{BFM2019}.
	The problem in its general form was posed by \textsc{Detlef Gronau} as Problem~2.1 in \cite{Grillhof2022}.
\end{rem}
From now on we assume that $f\colon\Delta\to\dbr$ satisfies the Sincov equation (\ref{sincov}) and we use $\Delta^\circ:=\{(x,y)\in\Delta\mid x<y\}$. And we proceed with some lemmata.
\begin{lem}\label{lem1}
	Assume that $(x,y)\in\Delta^\circ$ and that $f(x,y)\not=0$. Then
	\begin{equation}
	I_{(x,y)}:=\bigcup_{x'\leq x<y\leq y',\atop (x',y')\in\Delta^\circ, f(x',y')\not=0}[x',y']
	\end{equation}
is an interval, and $$f(u,v)\not=0, ~~~~\forall (u,v)\in\Delta \textrm{ satisfying }u,v\in I_{(x,y)}.$$
Moreover, $f(u,v)=1$,   if $u=v$.
\end{lem}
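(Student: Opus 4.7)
The plan is to verify, in order, three statements: that $I_{(x,y)}$ is an interval; that $f(u,v)\ne 0$ whenever $u,v\in I_{(x,y)}$ with $u\le v$; and that $f(u,u)=1$ for $u\in I_{(x,y)}$. The key structural observation underlying everything is that every summand $[x',y']$ of the defining union contains the fixed interval $[x,y]$, because the index set enforces $x'\le x<y\le y'$.

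For the first statement, a union of closed intervals all containing the common nondegenerate interval $[x,y]$ is connected and hence itself an interval; this is immediate.

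For the nonvanishing statement, I fix $u\le v$ in $I_{(x,y)}$ and pick summands $[x_1,y_1]$ and $[x_2,y_2]$ of the union containing $u$ and $v$ respectively; after possibly swapping, I may assume $x_1\le x_2$. Since $x_2\le x\le y\le y_1$, the two summands overlap, and their union is the interval $[x_1,\max(y_1,y_2)]$. The main technical step is to show that $f$ is nonzero at the endpoints of this combined interval. If $y_2\le y_1$ this is trivial; otherwise the Sincov equation applied to $x_2\le y_1\le y_2$ gives $f(x_2,y_2)=f(x_2,y_1)f(y_1,y_2)$, and the nonvanishing of the left-hand side forces $f(y_1,y_2)\ne 0$. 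A second Sincov application to $x_1\le y_1\le y_2$ then gives $f(x_1,y_2)=f(x_1,y_1)f(y_1,y_2)\ne 0$. Now $x_1\le u\le v\le \max(y_1,y_2)$, and two further Sincov applications express the nonzero endpoint value as $f(x_1,u)\,f(u,v)\,f(v,\max(y_1,y_2))$, forcing $f(u,v)\ne 0$.

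For the diagonal, the Sincov equation with all three variables equal to $u$ yields $f(u,u)=f(u,u)^2$, so $f(u,u)\in\{0,1\}$; the previous step applied to the pair $(u,u)$ rules out $0$, leaving $f(u,u)=1$. The step I expect to demand the most care is the gluing in the nonvanishing argument: merging the two possibly distinct summands witnessing $u$ and $v$ into a single interval on which $f$ is nonzero at the endpoints. Everything there pivots on the inequality $x_2\le y_1$, which is forced precisely by the defining condition $[x,y]\subseteq[x_i,y_i]$; the rest is straightforward bookkeeping with the multiplicative structure of Sincov.
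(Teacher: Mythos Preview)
Your proof is correct and follows essentially the same approach as the paper: both arguments observe that all summands contain $[x,y]$, pick two summands witnessing $u$ and $v$, merge them into a single interval $[\min x_i,\max y_i]$, and then use repeated Sincov factorizations to show $f$ is nonzero at the endpoints of the merged interval and hence at $(u,v)$. The only cosmetic difference is the choice of pivot points in the factorization---the paper routes through the fixed points $x$ and $y$ (writing $f(x_2,y_2)=f(x_2,x)f(x,y)f(y,y_2)$), whereas you route through $y_1$---but this is a matter of bookkeeping, not strategy.
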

\begin{proof}
Of course $I_{(x,y)}$ is an interval since it is the union of a set of intervals with non empty intersection. Note also that $$0\not=f(x',y')=f(x',u)f(u,v)f(v,y')~~~~~~~(x'\leq u\leq v\leq y')$$ implies $f(u,v)\not=0$.

Now let $u,v\in I_{(x,y)}$, $u\leq v$ and $x_0\leq u\leq y_0$, $x_1\leq v\leq y_1$ where $(x_i,y_i)\in\Delta^\circ$, $x_i\leq x<y\leq y_i$ and $f(x_i,y_i)\not=0$ for $i=0,1$. Put $x_2:=\min(x_0,x_1), y_2:=\max(y_0,y_1)$. Then $x_2\leq u\leq v\leq y_2$. Thus it is enough to show $f(x_2,y_2)\not=0$. Let, for example $x_0\leq x_1$. Then $f(x_2,x)\not=0$ since $0\not=f(x_0,y_0)=f(x_0,x)f(x,y_0)$. Analogously we have $f(y,y_2)\not=0$ and therefore $f(x_2,y_2)=f(x_2,x)f(x,y)f(y,y_2)\not=0$.

$f(u,u)=f(v,v)=1$, since $0\not=f(u,v)=f(u,u)f(u,v)=f(u,v)f(v,v)$.
\end{proof}
\begin{lem}\label{lem2} Let $x,y$ be as in the previous lemma. Then
	\begin{align}
		I_{(x,y)}&=\bigcup_{I\in\mathcal{J}} I\text{,  where}\\
		\mathcal{J}&:=\mathcal{J}_{(x,y)}:=\{I\subseteq J\mid I \text{ is an interval},\\
		& x,y\in I, f(u,v)\not=0\text{ for all } u,v\in I\text{ such that } u<v \}.\notag
	\end{align}
\end{lem}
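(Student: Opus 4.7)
The plan is to prove the set equality by two inclusions. The key observation is that $I_{(x,y)}$ itself turns out to be a member of $\mathcal{J}_{(x,y)}$, which immediately yields $I_{(x,y)}\subseteq\bigcup_{I\in\mathcal{J}}I$. For the reverse inclusion I would show that every $I\in\mathcal{J}_{(x,y)}$ is contained in $I_{(x,y)}$, by covering each point $z\in I$ with one of the intervals $[x',y']$ that appears in the union defining $I_{(x,y)}$.

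For the first inclusion the three conditions defining $\mathcal{J}_{(x,y)}$ all follow from Lemma~\ref{lem1}: $I_{(x,y)}$ is an interval; choosing $(x',y')=(x,y)$ in the defining union, which is legitimate since $f(x,y)\neq 0$, shows $[x,y]\subseteq I_{(x,y)}$ and hence $x,y\in I_{(x,y)}$; and Lemma~\ref{lem1} already provides $f(u,v)\neq 0$ whenever $u,v\in I_{(x,y)}$ with $u<v$. For the second inclusion, take $I\in\mathcal{J}_{(x,y)}$ and $z\in I$, and split into three cases: pick $(x',y')=(z,y)$ if $z\leq x$, $(x',y')=(x,y)$ if $x\leq z\leq y$, and $(x',y')=(x,z)$ if $z\geq y$. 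In each case $x'\leq x<y\leq y'$, so $(x',y')\in\Delta^\circ$ and $z\in[x',y']$; moreover $x',y'\in I$ by convexity of $I$ together with $x,y,z\in I$, so $f(x',y')\neq 0$ by the defining property of $\mathcal{J}_{(x,y)}$. Thus $[x',y']$ is one of the intervals whose union forms $I_{(x,y)}$, and consequently $z\in I_{(x,y)}$.

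No real obstacle arises here; the argument is essentially a direct unpacking of the definitions of $I_{(x,y)}$ and $\mathcal{J}_{(x,y)}$ combined with the facts established in Lemma~\ref{lem1}. The only step requiring a moment's care is the second inclusion, where one must verify that the chosen endpoints $x',y'$ actually lie in $I$ so that the non-vanishing hypothesis of $\mathcal{J}_{(x,y)}$ applies; this is immediate from convexity of $I$, but it is what makes the case split land cleanly on an admissible pair for every position of $z$.
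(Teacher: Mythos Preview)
Your proof is correct and follows essentially the same approach as the paper: both establish the first inclusion by observing that $I_{(x,y)}\in\mathcal{J}$ via Lemma~\ref{lem1}, and both prove the reverse inclusion by showing every $I\in\mathcal{J}$ lies inside $I_{(x,y)}$. The only cosmetic difference is in the second step: the paper exhausts $I$ by a nested sequence $[a_n,b_n]$ with $a_n\leq x<y\leq b_n$ and $f(a_n,b_n)\neq0$, whereas you cover each $z\in I$ directly by a single admissible interval via the three-case split on the position of $z$ relative to $x$ and $y$; both arguments are equally elementary.
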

\begin{proof}
	Note that by Lemma~\ref{lem1} $I_{(x,y)}\in\mathcal{J}$ implying $I_{(x,y)}\subseteq\bigcup_{J\in\mathcal{J}} J$.
	
	Let, on the other hand $I\in\mathcal{J}$. Then there are sequences $(a_n), (b_n)$ such that $(a_n)$ is decreasing, $(b_n)$ is increasing, $a_n\leq x<y\leq b_n$ and $I=\bigcup_{n\in\dbn} [a_n,b_n]$. Since $a_n, b_n\in I$ the value $f(a_n,b_n)$  has to be $\not=0$. So $[a_n,b_n]\subseteq I_{(x,y)}$. Therefore $I\subseteq I_{(x,y)}$ for all $I\in\mathcal{J}$.
\end{proof}
\begin{lem}\label{lem3}
	Let $x,y$ be as above and assume that $u\in J\setminus I_{(x,y)}$. Then either
	\begin{align}
	\label{ulessv}&u<v  \text{ for all } v\in I_{(x,y)}\text{ or }\\
	\label{vlessu}&v<u  \text{ for all } v\in I_{(x,y)}.
	\end{align}	
Moreover in case (\ref{ulessv}) $f(u,v)=0$ and in case (\ref{vlessu}) $f(v,u)=0$.
\end{lem}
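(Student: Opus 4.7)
The plan is to first dispatch the order-theoretic dichotomy and then use Sincov's equation together with the defining maximality of $I_{(x,y)}$ to pin down where $f$ must vanish. For the first part, Lemma~\ref{lem1} tells us that $I_{(x,y)}$ is an interval. Combined with $u \in J \setminus I_{(x,y)}$, connectedness forces $u$ to lie entirely on one side of $I_{(x,y)}$, since a point strictly between two elements of an interval would itself belong to the interval, and equality with some $v\in I_{(x,y)}$ is excluded by $u\notin I_{(x,y)}$.

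For the vanishing statement in case (\ref{ulessv}), the strategy is proof by contradiction. Suppose there were a $v_0 \in I_{(x,y)}$ with $f(u,v_0) \neq 0$. Setting $y_1 := \max(v_0, y)$, I would verify that $y_1 \in I_{(x,y)}$ and that $f(u, y_1) \neq 0$: if $v_0 \geq y$ this is immediate (with $y_1=v_0$), while if $v_0 < y$ I invoke Lemma~\ref{lem1} to get $f(v_0, y) \neq 0$ (both points lie in $I_{(x,y)}$) and Sincov to write
\[ f(u, y) = f(u, v_0)\, f(v_0, y) \neq 0. \]
Since $u < x < y \leq y_1$ and $f(u, y_1) \neq 0$, the closed interval $[u, y_1]$ qualifies as one of the intervals in the union defining $I_{(x,y)}$, so $u \in I_{(x,y)}$, contradicting the hypothesis $u\in J\setminus I_{(x,y)}$.

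Case (\ref{vlessu}) is handled by the mirror argument: if $f(v_0,u)\neq 0$ for some $v_0\in I_{(x,y)}$, I set $x_1 := \min(v_0, x)$ and derive $f(x_1,u)\neq 0$, using Lemma~\ref{lem1} to supply $f(x_1, v_0) \neq 0$ when $v_0 > x$ and Sincov to combine the two factors into $f(x_1,u)=f(x_1,v_0)f(v_0,u)$; this places the pair $(x_1, u)$ in the union defining $I_{(x,y)}$ and hence $u \in I_{(x,y)}$, again a contradiction. There is no genuine obstacle here; the only bookkeeping is to ensure each Sincov factorisation is applied on an admissibly ordered triple and that the enlarged interval truly contains $[x,y]$---both automatic from the case splits on $v_0$ versus $y$ (respectively $v_0$ versus $x$).
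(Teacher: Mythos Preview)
Your proof is correct and follows essentially the same approach as the paper: the order dichotomy is the same connectedness argument, and the vanishing statement is the same proof by contradiction, showing that $f(u,\cdot)\neq 0$ at a point $\geq y$ would place $u$ inside one of the intervals in the union defining $I_{(x,y)}$. The only cosmetic difference is in the choice of that endpoint: you take $y_1=\max(v_0,y)$ and split on $v_0\lessgtr y$, whereas the paper picks a $y_0\geq y$ directly from the defining union (some $[x_0,y_0]\ni v_0$ with $x_0\leq x<y\leq y_0$, $f(x_0,y_0)\neq 0$) and thereby avoids the case split.
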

\begin{proof}
	Assume $v\leq u\leq w$ for some $v,w\in I_{(x,y)}$. The $u\in[v,w]\subseteq I_{(x,y)}$, a contradiction.
	
	So, let $u<v$ for all $v\in I_{(x,y)}$, and suppose that $f(u,v_0)\not=0$ for some $v_0\in I_{(x,y)}$. Then there are $x_0,y_0\in I_{(x,y)}$ such that $x_0\leq v_0\leq y_0$, $x_0\leq x<y\leq y_0$ and $f(x_0,y_0)\not=0$.
Then $f(u,y_0)=f(u,v_0)f(v_0,y_0)\not=0$ since $f(u,v_0), f(v_0,y_0)\not=0$.
But then $[u,y_0]\subseteq I_{(x,y)}$ contradicting $u\not\in I_{(x,y)}$.

The other case, $v<u$ for all $v\in I_{(x,y)}$ may be treated similarly.
\end{proof}
\begin{lem}\label{lem4}
	Let $(x,y)\in \Delta^\circ$ be such that $f(x,y)\not=0$. Then $I_{(u,v)}=I_{(x,y)}$ for all $u,v\in I_{(x,y)}$ satisfying $u<v$.
\end{lem}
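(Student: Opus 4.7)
The plan is to prove $I_{(u,v)} = I_{(x,y)}$ by mutual inclusion, using the min-max envelope technique already present in the proof of Lemma~\ref{lem1}. As a preliminary step, Lemma~\ref{lem1} guarantees $f(u,v)\neq 0$ (since $u<v$ both lie in $I_{(x,y)}$), so that $I_{(u,v)}$ is meaningful. Moreover, re-running the min-max construction from the proof of Lemma~\ref{lem1} with $u,v$ in place of a generic pair produces a single admissible pair $(a,b)\in\Delta^\circ$ which simultaneously sandwiches both $(x,y)$ and $(u,v)$: that is, $a\leq x<y\leq b$, $a\leq u<v\leq b$, and $f(a,b)\neq 0$. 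This pair will serve as the bridge between the union defining $I_{(x,y)}$ and the one defining $I_{(u,v)}$.

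For the inclusion $I_{(u,v)}\subseteq I_{(x,y)}$, I would pick $w\in I_{(u,v)}$ inside some admissible pair $[x',y']$ for $(u,v)$, so that $x'\leq u<v\leq y'$ and $f(x',y')\neq 0$. Forming the envelope $(x'',y''):=(\min(x',a),\max(y',b))$ gives $x''\leq x<y\leq y''$ and $w\in[x'',y'']$, so the only thing to check is $f(x'',y'')\neq 0$. Here I would use the Sincov factorization
\begin{equation*}
f(x'',y'') \;=\; f(x'',u)\,f(u,v)\,f(v,y''),
\end{equation*}
each factor being nonzero: $f(u,v)\neq 0$ by the opening remark, while each of $f(x'',u)$ and $f(v,y'')$ is, depending on which of $x',a$ (respectively $y',b$) is the more extreme endpoint, itself a Sincov factor of one of the nonzero values $f(x',y')$ or $f(a,b)$.

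The reverse inclusion $I_{(x,y)}\subseteq I_{(u,v)}$ is entirely symmetric: an admissible pair $[a',b']$ for $(x,y)$ containing some $w$ is combined with $(a,b)$ into the envelope $(\min(a',a),\max(b',b))$, which now sandwiches $(u,v)$ and is admissible by the same three-fold factorization, this time taken through the pair $(u,v)$. The main obstacle---more a matter of bookkeeping than of genuine difficulty---is the case distinction for which endpoint is the outer one in each minimum and maximum when verifying non-vanishing at the envelope; once that is accounted for, the multiplicativity encoded in Sincov's equation delivers the result essentially for free.
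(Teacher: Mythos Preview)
Your argument is correct, but it takes a different and longer route than the paper's. The paper exploits Lemma~\ref{lem2}: since Lemma~\ref{lem1} says that $f$ never vanishes on ordered pairs from $I_{(x,y)}$, the interval $I_{(x,y)}$ itself belongs to the family $\mathcal{J}_{(u,v)}$, whence $I_{(x,y)}\subseteq I_{(u,v)}$ immediately; then $x,y\in I_{(u,v)}$ and the same reasoning with the roles swapped gives the reverse inclusion. That is the whole proof---two lines, no envelopes, no case splits. Your approach bypasses Lemma~\ref{lem2} entirely and works directly from the defining union via the min--max envelope trick of Lemma~\ref{lem1}; this is more self-contained but costs you the bookkeeping you flag at the end (and in the reverse inclusion, factoring through $(u,v)$ rather than $(x,y)$ forces an extra split such as $f(a',u)=f(a',a)f(a,u)$ when $a''=a'$, which you should make explicit). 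Either way works; the paper's version shows why Lemma~\ref{lem2} was worth isolating.
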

\begin{proof}
	By Lemma~\ref{lem1} we have $f(u',v')\not=0$ for all $u', v'\in I_{(x,y)}$ with $u'<v'$. Thus using Lemma~\ref{lem2} results in $I_{(x,y)}\subseteq I_{(u,v)}$.  Therefore $x,y\in I_{(u,v)}$, which analogously implies $I_{(u,v)}\subseteq I_{(x,y)}$.
\end{proof}
\begin{lem}\label{lem5}
Let $I_1, I_2$ be intervals such that $I_1\cap I_2=\{a\}$. Then either $a=\min(I_1)=\max(I_2)$ or $a=\min(I_2)=\max(I_1)$.
\end{lem}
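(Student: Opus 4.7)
The plan is to first show that $a$ must be an extreme point (minimum or maximum) of each of $I_1$ and $I_2$, and then to rule out the ``parallel'' cases where $a$ is simultaneously the minimum of both or the maximum of both; the two remaining combinations are exactly the alternatives claimed in the lemma.

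For the first step I would argue by contradiction. Suppose $a$ is an interior point of $I_1$, meaning that there exist $b,c\in I_1$ with $b<a<c$; convexity of intervals then gives $[b,c]\subseteq I_1$. Since (in the intended non-degenerate situation) $I_2$ contains some point $d\ne a$, one of two cases occurs. If $d<a$, convexity yields $[d,a]\subseteq I_2$, whence $[\max(b,d),a]\subseteq I_1\cap I_2$ is a non-degenerate sub-interval, contradicting $I_1\cap I_2=\{a\}$; the case $d>a$ is handled symmetrically using $c$. Hence $a$ must coincide with $\min(I_1)$ or $\max(I_1)$, and by the symmetric argument with roles swapped, with $\min(I_2)$ or $\max(I_2)$ as well.

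For the second step, assume $a=\min(I_1)=\min(I_2)$. Picking $b\in I_1$ and $c\in I_2$ with $a<b$ and $a<c$ (which exist by non-degeneracy), the interval $[a,\min(b,c)]$ is contained in $I_1\cap I_2$ but properly contains $\{a\}$, a contradiction. The case $a=\max(I_1)=\max(I_2)$ is ruled out in the same way. Only the combinations $a=\min(I_1)=\max(I_2)$ and $a=\min(I_2)=\max(I_1)$ survive, which is the desired dichotomy.

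The only real subtlety, and thus the ``main obstacle'', is that the statement tacitly requires both $I_1$ and $I_2$ to have more than one point: otherwise a singleton $I_1=\{a\}$ lying in the interior of $I_2$ would satisfy the hypothesis while falsifying both alternatives. In the intended application this is harmless, because Lemma~\ref{lem5} will be invoked on intervals of the form $I_{(x,y)}$ furnished by Lemma~\ref{lem1}, which by construction contain distinct points $x<y$ and are therefore non-degenerate.
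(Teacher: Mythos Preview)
Your proof is correct and complete; the paper itself omits the argument entirely, writing only that ``the (simple) considerations are left to the reader,'' so your verification is exactly the kind of routine check intended. Your observation that the lemma tacitly assumes both intervals are non-degenerate---and that this is harmless because in Lemma~\ref{lem6} it is applied only to intervals $I_{(x,y)}$ containing at least the two distinct points $x<y$---is a valid caveat that the paper does not make explicit.
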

\begin{proof}
	%Let $x_i\in I_i, x_i\not=a$ ($i=1,2$) and let $x_1<a$ without loss of generality. Then $x_2>a$ since otherwise $[\min(x_1,x_2), a]\subseteq I_1\cap I_2$ contradicting $x_1, x_2\not=a$.
	%This implies $x_1<a\leq \sup I_1$ and $\inf I_2\leq a<x_2$. In case $a<\sup I_1$ we can find $c\in I_1, a<c<\sup I_1$. So $[a,c]\subseteq I_1$. Moreover $[a,x_2]\subseteq I_2$. Accordingly $[a,\min(c,x_2)]\subseteq I_1\cap I_2={a}$ contradicting $x_2,c\not=a$. Thus $a=\max(I_1)$. The proof of $a=\min(I_2)$ is similar.

%% It is straightforward to verify. We left it  to the reader.
The (simple) considerations are left to the reader.
\end{proof}
\begin{lem}\label{lem6}
	Let $(x,y), (u,v)\in\Delta^\circ$ with $f(x,y), f(u,v)\not=0$. Then either $I_{(x,y)}=I_{(u,v)}$ or $I_{(x,y)}\cap I_{(u,v)}=\emptyset$.
\end{lem}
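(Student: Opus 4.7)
The plan is to assume $I_{(x,y)}\cap I_{(u,v)}\neq\emptyset$ and show the two intervals coincide, splitting the argument according to whether this intersection contains at least two points or exactly one.

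If there are distinct $a<b$ in $I_{(x,y)}\cap I_{(u,v)}$, then Lemma~\ref{lem1} applied inside $I_{(x,y)}$ gives $f(a,b)\neq 0$, so the notation $I_{(a,b)}$ is meaningful. Lemma~\ref{lem4} applied twice---once with $a,b\in I_{(x,y)}$ and once with $a,b\in I_{(u,v)}$---yields $I_{(a,b)}=I_{(x,y)}$ and $I_{(a,b)}=I_{(u,v)}$, hence $I_{(x,y)}=I_{(u,v)}$.

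The remaining case, that the intersection is a single point $\{a\}$, should be ruled out by a contradiction. By Lemma~\ref{lem5}, after possibly swapping the roles of the two intervals, I may assume $a=\min I_{(x,y)}=\max I_{(u,v)}$. I would then show that $f$ is non-zero on every ordered pair inside $I:=I_{(u,v)}\cup I_{(x,y)}$: pairs lying entirely in one of the two intervals are covered by Lemma~\ref{lem1}, while a \emph{cross} pair $s<t$ with $s\in I_{(u,v)}$ and $t\in I_{(x,y)}$ automatically satisfies $s\leq a\leq t$, and the Sincov equation gives
\begin{equation*}
f(s,t)=f(s,a)\,f(a,t)\neq 0,
\end{equation*}
both factors being non-zero by Lemma~\ref{lem1}, since $a$ lies in both intervals. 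The set $I$ is an interval (the two intervals share the point $a$), contains $x,y$, and has $f(\cdot,\cdot)\neq 0$ on its ordered pairs, so $I\in\mathcal{J}_{(x,y)}$. Lemma~\ref{lem2} then gives $I\subseteq I_{(x,y)}$, that is $I_{(u,v)}\subseteq I_{(x,y)}$. Combined with $I_{(u,v)}\cap I_{(x,y)}=\{a\}$, this forces $I_{(u,v)}=\{a\}$, contradicting $u<v$ with $u,v\in I_{(u,v)}$.

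The main point of care is in the single-point case: one must verify that the Sincov multiplicativity propagates non-vanishing across the junction point $a$, which is precisely what upgrades $I_{(u,v)}\cup I_{(x,y)}$ to a member of $\mathcal{J}_{(x,y)}$ and triggers the contradiction via Lemma~\ref{lem2}.
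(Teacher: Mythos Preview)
Your proof is correct and follows essentially the same two-case split as the paper. In the single-point case the paper is slightly more economical: rather than verifying non-vanishing on every ordered pair in $I_{(u,v)}\cup I_{(x,y)}$ and invoking $\mathcal{J}_{(x,y)}$, it checks one specific cross pair through $a$ (in its labeling $f(x,v)=f(x,a)f(a,v)\neq 0$), so that the interval $[x,v]$ already lies in both $I_{(x,y)}$ and $I_{(u,v)}$ by their very definitions, contradicting $I_{(x,y)}\cap I_{(u,v)}=\{a\}$.
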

\begin{proof}
	Let $I_{(x,y)}\cap I_{(u,v)}\not=\emptyset$. If this intersection contains two different points $a,b$ with, say, $a<b$, then $I_{(x,y)}=I_{(a,b)}=I_{(u,v)}$ by Lemma~\ref{lem4}. Otherwise $I_{(x,y)}\cap I_{(u,v)}=\{a\}$. Using Lemma~\ref{lem5} we may assume, without loss of generality that  $a=\max I_{(x,y)}=\min I_{(u,v)}$.
	Therefore $x<y\leq a\leq u<v$ and $f(x,a), f(a,v)\not=0$. Thus also $f(x,v)=f(x,a)f(a,v)\not=0$ which implies by Lemma~\ref{lem2} that $[x,v]\subseteq I_{(x,y)}\cap I_{(u,v)}$. So $I_{(x,y)}\cap I_{(u,v)}=\{a\}$ is not possible. Accordingly $I_{(x,y)}\cap I_{(u,v)}\not=\emptyset$ implies $I_{(x,y)}= I_{(u,v)}$.
\end{proof}
Now we are able to formulate necessary conditions for the solutions $f$ of the Sincov equation on $ \Delta=  \Delta_J$.
\begin{thm}\label{thm5}
	Let $f\colon \Delta=\Delta_J\to\dbr$ be a solution of (\ref{sincov}). Then there is a countable (possibly empty) system $\mathcal{S}$ of pairwise disjoint non-trivial intervals $I\subseteq J$ and there is a function $d\colon \bigcup_{I\in \mathcal{S}}I  \to \dbr^\times:=\dbr\setminus\{0\}$, such that
	\begin{equation}
	f(x,y)=\frac{d(y)}{d(x)},\quad x, y\in I\in \mathcal{S}, x\leq y.
	\end{equation}
Moreover for any $x\in I\in \mathcal{S}$
\begin{equation}\label{equalzero}
	f(x,y)=f(z,x)=0,\quad I\not\ni z<x<y\not\in I.
\end{equation}
and \begin{equation}\label{eqfo1}
  f(x,x)=\begin{cases}
    1& \textrm{if }x\in \bigcup_{I\in \mathcal{S}}I,\\
    0 \textrm{ or }1 & \textrm{otherwise.}
\end{cases}\end{equation}
%\begin{align}
%	&g(x)=1\text{ for all } x\in\bigcup_{I\in S} I \text{ and }\label{gx1}\\
%	&f(x,y)=g(x)\delta_x(y),\, f(z,x)=g(x)\delta_x(z)\label{gx01}\\
%	&\quad \text{ for all } x, y, z\in J \text{ with } z<x<y, z\nsim x\nsim y,\notag
%\end{align}
%where $x\sim y$ holds true per definition iff there is some $I\in\mathcal{S}$ such that $x,y\in I$. Moreover $\delta_x$ is the Kronecker $\delta$-function: $\delta_x(x)=1$, $\delta_x(y)=0$ when $y\not=x$.
\end{thm}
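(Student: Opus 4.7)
The plan is to use Lemmas~\ref{lem1}--\ref{lem6} to partition the ``non-vanishing part'' of $\Delta$ into maximal components, and then to solve the classical (nowhere-vanishing) Sincov equation on each one. Concretely, set
$$\mathcal{S}:=\{I_{(x,y)}\mid(x,y)\in\Delta^\circ,\ f(x,y)\neq 0\}.$$
Lemma~\ref{lem6} says that any two elements of $\mathcal{S}$ either coincide or are disjoint, so $\mathcal{S}$ is a family of pairwise disjoint non-trivial subintervals of $J$. Countability is immediate: disjoint non-trivial real intervals each contain a rational, so $|\mathcal{S}|\leq|\dbq|$.

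To construct $d$, I would work interval by interval. Fix $I\in\mathcal{S}$ and choose a reference point $t_I\in I$. By Lemma~\ref{lem1}, $f(u,v)\neq 0$ for every $u,v\in I$ with $u\leq v$, and $f(u,u)=1$ on $I$. Imitating formula~(\ref{phi}), define
$$d(x):=\begin{cases} f(t_I,x) & \text{if } x\geq t_I,\\ 1/f(x,t_I) & \text{if } x<t_I,\end{cases}\qquad x\in I,$$
which takes values in $\dbr^\times$. A three-case check on the position of $t_I$ relative to a given pair $x\leq y$ in $I$---namely $t_I\leq x\leq y$, $x\leq t_I\leq y$, $x\leq y\leq t_I$, each case a single application of Sincov~(\ref{sincov})---gives $f(x,y)=d(y)/d(x)$. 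Since the members of $\mathcal{S}$ are disjoint, these local definitions assemble into one well-defined map $d\colon\bigcup_{I\in\mathcal{S}}I\to\dbr^\times$.

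For (\ref{equalzero}), let $x\in I\in\mathcal{S}$ and $y\notin I$ with $y>x$. Since $I$ is an interval and $y$ lies outside $I$ but above some point of $I$, no element of $I$ can exceed $y$; Lemma~\ref{lem3} applied with $u:=y$ is thus in case~(\ref{vlessu}) and yields $f(v,y)=0$ for every $v\in I$, in particular $f(x,y)=0$. The case $z<x\notin I$ is symmetric. For (\ref{eqfo1}), points in $\bigcup\mathcal{S}$ already satisfy $f(x,x)=1$ by Lemma~\ref{lem1}; for any other $x\in J$, setting $s=t=u=x$ in (\ref{sincov}) gives $f(x,x)=f(x,x)^2$, forcing $f(x,x)\in\{0,1\}$.

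The main obstacle is conceptual rather than computational: one must realise that the correct indexing set for the decomposition is not individual points but the classes $I_{(x,y)}$ identified by Lemmas~\ref{lem4} and~\ref{lem6}. Once this viewpoint is adopted, the theorem reduces to running the classical nowhere-vanishing Sincov argument in parallel over the components of $\mathcal{S}$, glued together by the ``zero-spill'' information of Lemma~\ref{lem3}.
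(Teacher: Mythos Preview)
Your proposal is correct and follows essentially the same route as the paper: the same definition of $\mathcal{S}$, the same use of Lemmas~\ref{lem1}, \ref{lem3}, \ref{lem4}, \ref{lem6}, the same construction of $d$ via a reference point in each $I$ (as in~(\ref{phi})/(\ref{eq29})), and the same idempotency argument for $f(x,x)$. The only cosmetic difference is your countability argument (one rational per interval) versus the paper's use of Lemma~\ref{lem4} to relabel each $I_{(x,y)}$ by rational endpoints; both are valid.
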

\begin{proof} Let $\mathcal{S}:=\{I_{(x,y)}\mid (x,y)\in\Delta^\circ, f(x,y)\not=0\}$. Then the intervals in $\mathcal{S}$ are pairwise disjoint by Lemma~\ref{lem6}. Moreover $\mathcal{S}$ is countable since every $I_{(x,y)}$ equals  $I_{(r,s)}$ with $x\leq r<s<y$ and $r,s\in\dbq$ by Lemma~\ref{lem4}.

Let $I\in\mathcal{S}$ and $x_0\in I$. Then $d\colon I\to\dbr^\times$,
\begin{equation}\label{eq29}
	d(x)=
	\begin{cases}
		f(x_0,x)&\text{, if } x\geq x_0\\
		\frac1{f(x,x_0)}& \text{, if } x<x_0,\\
	\end{cases}	
\end{equation}
is well defined and satisfies $f(x,y)=\frac{d(y) }{d(x)}$ for all $x,y\in I, x\leq y$. This can be easily seen by distinguishing the cases $x\geq x_0$, $y<x_0$ and $x< x_0\leq y$.
	
(\ref{equalzero}) follows from Lemma~\ref{lem3}.

Since $f(x,x)f(x,y)=f(x,y)\ne 0$ for $x,y\in I\in\mathcal{S}$, hence $f(x,x)=1$ which is the first case of \eqref{equalzero}. The second case of \eqref{equalzero} follows from $f(x,x)=f(x,x)f(x,x)$ and from Lemma~\ref{lem3}. (Note also that $f(x,y)=0$ for all $x,y\not\in \bigcup_{I\in \mathcal{S}} I$ with $x<y$.)
\end{proof}
Finally we prove that all Sincov functions on $ \Delta=  \Delta_J$ may be obtained by using the just derived necessary conditions.
\begin{thm}\label{thm6}
	Let $J$ be a non-trivial interval and $\mathcal{S}$ at most countable (maybe empty) set of disjoint subintervals of $J$. Let furthermore $d\colon \bigcup_{I\in \mathcal{S}}I  \to \dbr^\times$ be an arbitrary function. Then $f\colon \Delta=\Delta_J\to\dbr$ defined by
	\begin{equation}
		f(x,y):=
		\begin{cases}
			\frac{d(y)}{d(x)}, & \textrm{if } x,y\in I\in\mathcal{S},\\
			0, &\text{if } x,y \textrm{ not in the same } I\in \mathcal{S},\\
            0 \textrm{ or }1 \textrm{ arbitrarily}, & \textrm{ if } x=y \not\in  \bigcup_{I\in \mathcal{S}}I
		\end{cases}
	\end{equation}
satisfies (\ref{sincov}).
\end{thm}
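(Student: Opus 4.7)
The plan is to verify the Sincov equation $f(s,t)f(t,u)=f(s,u)$ by a straightforward case analysis on the position of $s\le t\le u$ relative to the intervals in $\mathcal{S}$. The key structural observation I want to use throughout is that the intervals in $\mathcal{S}$ are disjoint and are honest intervals: hence if $s<u$ both lie in some $I\in\mathcal{S}$, then automatically every $t$ with $s\le t\le u$ also lies in $I$. Equivalently, if $s,t,u$ are not all contained in a common $I\in\mathcal{S}$, then $s$ and $u$ cannot lie in a common $I$ either.

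First I would record the value of $f$ on the diagonal: if $x\in I\in\mathcal{S}$, the first clause of the definition gives $f(x,x)=d(x)/d(x)=1$, whereas if $x\notin\bigcup_{I\in\mathcal{S}}I$ the third clause makes $f(x,x)\in\{0,1\}$ an arbitrary idempotent. In particular $f(x,x)^{2}=f(x,x)$ in every case. Next I split $s\le t\le u$ into the strict case and the degenerate cases.

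\textbf{Strict case $s<t<u$.} If $s,t,u$ all lie in a common $I\in\mathcal{S}$, both sides equal $d(u)/d(s)$ by the telescoping
\[
\frac{d(t)}{d(s)}\cdot\frac{d(u)}{d(t)}=\frac{d(u)}{d(s)}.
\]
Otherwise, by the observation above, $s$ and $u$ do not share an interval of $\mathcal{S}$, so $f(s,u)=0$. It remains to show the left-hand side also vanishes. If $s,t$ were in a common $I_{1}\in\mathcal{S}$ and $t,u$ in a common $I_{2}\in\mathcal{S}$, disjointness of $\mathcal{S}$ (via $t\in I_{1}\cap I_{2}$) would force $I_{1}=I_{2}$, contradicting that $s,t,u$ are not all in one $I$. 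Hence at least one of the factors $f(s,t)$, $f(t,u)$ is zero, and the equation reduces to $0=0$.

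\textbf{Degenerate cases.} If $s=t$, the Sincov identity becomes $f(s,s)f(s,u)=f(s,u)$. When $s\in\bigcup I$ this is immediate from $f(s,s)=1$; when $s\notin\bigcup I$, then $s$ shares no interval with any point, so $f(s,u)=0$ for $u>s$, while $u=s$ reduces to $f(s,s)^{2}=f(s,s)$, valid by idempotency. The case $t=u$ is symmetric. I do not expect any real obstacle here; the only subtlety worth watching is that in the degenerate subcase, the ``arbitrary'' choice of $f(x,x)\in\{0,1\}$ at isolated points outside $\bigcup_{I\in\mathcal{S}}I$ never interacts with a nonzero value of $f$, because such an $x$ cannot be Sincov-linked (via a nonzero $f$) to any other point — so the arbitrariness is compatible with the equation.
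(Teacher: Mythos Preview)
Your proof is correct and follows essentially the same approach as the paper: a direct case analysis using the interval property (if the two outer points share an $I\in\mathcal{S}$ then so does the middle one), disjointness of the intervals in $\mathcal{S}$, and idempotency of the diagonal values. The only cosmetic difference is that you split first into strict versus degenerate triples and then by interval membership, whereas the paper splits first according to whether the leftmost point lies in some $I\in\mathcal{S}$; the underlying verifications are the same.
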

\begin{proof}
	Let $x\leq y\leq z$. If $x,z\in I(\in \mathcal{S})$ then also $y\in I$.
	Moreover $$f(x,y)f(y,z)=\frac{d(y)}{d(x)}\frac{d(z)}{d(y)}=\frac{d(z)}{d(x)}=f(x,z).$$
	If $x\in I$ and $z\not\in I$ we have $f(x,z)=0$. Assuming  $y\in I$ implies $f(y,z)=0$ and therefore $0=f(x,z) =f(x,y) f(y,z)$. This also holds true when also $y\not\in I$.
	
	If $x\not\in \bigcup_{I\in \mathcal{S}} I$ we have $f(x,z)=0$ for $x<z$ and therefore $0=f(x,z)=0\cdot f(y,z)=f(x,y)f(y,z)$ if additionally $x<y$.
	In case $y=x$ we have $f(y,z)=0$ implying (\ref{sincov}). If finally $x=y=z$ we again have $f(x,z)=f(x,y) f(y,z)$.
\end{proof}
\begin{rem}
	Gronau in \cite{Grillhof2022} gave two types of solutions. The first one with $f(x,y)=\delta_x(y)$ for all $x\leq y$ is the special case $\mathcal{S}=\emptyset, g=1$ of Theorem~\ref{thm6}. The second one may be described by $\mathcal{S}=\{[x_0,y_0]\}$, $\sigma=d_{[x_0,y_0]}$ and $g(x)=0$ for all $x\not\in[x_0,y_0]$.
\end{rem}

\begin{rem}[Generalization] The codomain of the function $f$ may be chosen much more general without altering the results.
	
Let $(G,\cdot)$ be an arbitrary non necessarily abelian group with neutral element $1$ and add an \emph{absorbing} element $0$, such that $0\not\in G$  and in $G':=G\cupdot\{0\}$ we have $x\cdot 0=0\cdot x=0$. Then the only elements in $G'$ with $x^2=x$ are $0$ and $1$. This, more or less, implies that Theorems~\ref{thm5},~\ref{thm6} also hold in the new situation with the modification that the functions $d$ are defined as
\begin{equation}\label{eq29'}
	d(x)=
	\begin{cases}
		f(x_0,x)&\text{, if } x\geq x_0\\
		f(x,x_0)^{-1}& \text{, if } x<x_0,\\
	\end{cases}	
\end{equation}
because then $f(x,y)=d(x)^{-1}d(y)$ in Theorem~\ref{thm5}
and
\begin{equation*}
	f(x,y)f(y,z)=	d(x)^{-1}d(y)d(y)^{-1}d(z)=d(x)^{-1}d(z)=f(x,z)
\end{equation*}
in the proof of Theorem~\ref{thm6}.

Examples for groups with added absorbing elements are $G\cupdot\{0\}$ where $G$ is subgroup of $K^\times$ for division algebras $K$, in particular $\dbr$ and $[0,\infty)$ and also,for any $n$ and any field $K$, the union  $G\cupdot\{0\}$ where $G$ is a subgroup $\text{Gl}_n(K)$ and $0$ the null matrix. The last example itself is a special case of $G\cupdot\{0\}$ where $G$ is a subgroup of the group of units in a unitary ring $R$ and $0$ the zero element in $R$.
\end{rem}
\begin{rem}
	In \cite{BFM2019} the Sincov equation is considered in the form
	\begin{equation}\label{FechnerSincov}
		g(x,z)=g(x,y)g(y,z),\quad x>y>z, x,y,z\in J,
	\end{equation}
with $J=(0,1)$.
The general solution of (\ref{FechnerSincov}) is easily derived from Theorems~\ref{thm5} and~\ref{thm6} by
\begin{enumerate}
	\item[i)] considering $f$ defined by $f(y,x)=g(x,y)$ when  $x>y$  and
	\item[ii)] by observing that we may extend $f$ to the pairs $(x,x)$ by choosing $f(x,x)\in\{0,1\}$ as in Theorems \ref{thm5} and \ref{thm6}.
\end{enumerate}

\end{rem}
As a result of Theorems \ref{thm6} and \ref{thm5} we obtain the following characterization of $F(K,s,t)$ satisfying \eqref{delta1}, \eqref{delta2} and \eqref{delta5}.
\begin{cor}
    The function $F\colon[0,\infty)\times\Delta_J\to[0,\infty)$
satisfies the conditions \eqref{delta1}, \eqref{delta2} and \eqref{delta5}
iff there is a countable (possibly empty) system $\mathcal{S}$ of pairwise disjoint non-trivial intervals $I\subseteq J$ and there is a function $d\colon \bigcup_{I\in \mathcal{S}}I  \to \dbr^\times$, such that
	\begin{equation}
	F(K,s,t)=K\frac{d(s)}{d(t)},\quad s, t\in I\in \mathcal{S}, s\leq t.
	\end{equation}
Moreover for any $s\in I\in \mathcal{S}$
\begin{equation}\label{equalzero1}
	F(K,s,t)=F(K,u,s)=0,\quad I\not\ni u<s<t\not\in I.
\end{equation}
and \begin{equation}\label{eqfox}
  F(K,s,s)=\begin{cases}
    K& \textrm{if }s\in \bigcup_{I\in \mathcal{S}}I,\\
    0 \textrm{ or }K & \textrm{otherwise.}
\end{cases}\end{equation}
\end{cor}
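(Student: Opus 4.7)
The plan is to derive the corollary as a direct consequence of Theorem~\ref{thm4} combined with Theorems~\ref{thm5} and~\ref{thm6}. The only non-routine point is that the non-negativity of $F$ forces the function $d$ (which in Theorems~\ref{thm5},~\ref{thm6} may take negative values) to be of constant sign on each $I\in\mathcal{S}$; once that is observed, both directions become transparent substitutions of $F=Kf$.

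\emph{Necessity.} Suppose $F$ satisfies \eqref{delta1}, \eqref{delta2} and \eqref{delta5}. Theorem~\ref{thm4} provides a Sincov solution $f\colon\Delta_J\to[0,\infty)$ with $F(K,s,t)=Kf(s,t)$. Apply Theorem~\ref{thm5} to $f$: one obtains the countable (possibly empty) family $\mathcal{S}$ of pairwise disjoint non-trivial subintervals of $J$ together with a function $d\colon\bigcup_{I\in\mathcal{S}}I\to\dbr^\times$ such that $f(x,y)=d(y)/d(x)$ on each $I\in\mathcal{S}$. Because $f\geq 0$, the construction \eqref{eq29} automatically yields a $d$ of one fixed sign on each $I$; by replacing $d|_I$ by $-d|_I$ on those intervals where that sign is negative we may assume $d>0$ throughout $\bigcup_{I\in\mathcal{S}}I\subset\dbr^\times$. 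Multiplying the identity $f(s,t)=d(t)/d(s)$ by $K$ yields the formula for $F$ stated in the corollary on pairs in the same $I\in\mathcal{S}$, while \eqref{equalzero1} and \eqref{eqfox} follow by multiplying \eqref{equalzero} and \eqref{eqfo1} of Theorem~\ref{thm5} by $K$.

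\emph{Sufficiency.} Given data $(\mathcal{S},d)$ as in the corollary, define $f\colon\Delta_J\to\dbr$ by the explicit formula of Theorem~\ref{thm6}. That theorem ensures $f$ solves the Sincov equation, and the positivity of $d$ on each $I\in\mathcal{S}$ forces $f\geq 0$, so $f$ maps into $[0,\infty)$. Setting $F(K,s,t):=Kf(s,t)$ and invoking Theorem~\ref{thm4} delivers \eqref{delta1}, \eqref{delta2} and \eqref{delta5}. The main obstacle is the sign issue just discussed; everything else is mechanical translation between $f$ and $F=Kf$.
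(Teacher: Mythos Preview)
Your proposal is correct and follows the same route as the paper, which simply states the corollary ``as a result of Theorems~\ref{thm6} and~\ref{thm5}'' without an explicit proof; you are merely unpacking that reference together with the obvious use of Theorem~\ref{thm4} to pass between $F$ and $f$. Your discussion of the sign of $d$ is a useful addition (since the codomain of $F$ is $[0,\infty)$), and note also that the displayed formula in the corollary should read $K\,d(t)/d(s)$ to match Theorem~\ref{thm5}, as you effectively wrote.
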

\begin{rem}
This corollary also implies Theorem~\ref{thm3} by observing that \eqref{delta3}
implies $\mathcal{S}=\{\mathbb{R}\}$ and also that  $d$ is monotonically increasing.
\end{rem}

\section*{Acknowledgement} We would like to express our gratitude to the anonymous referee for the valuable comments, which help to clarify certain points and increase the quality of the paper.

%\printbibliography

\begin{tabular*}{\textwidth}{l@{\extracolsep{\fill}}cl}
	Gergely Kiss, & & Jens Schwaiger \\
	Alfréd Rényi Institute of Mathematics & & Institut f\"{u}r Mathematik\\
	 & & Karl-Franzens Universit\"at
	Graz\\
	Re\'altanoda utca 13-15 & & Heinrichstra{\ss}e 36\\
	H-1053 Budapest & & A-8010 Graz\\
	Hungary & & Austria\\
	\texttt{kigergo57@gmail.com} & &
	\texttt{jens.schwaiger@uni-graz.at}
\end{tabular*}
\end{document}